\newtheorem{theorem}{Theorem}
\newtheorem{conjecture}[theorem]{Conjecture}
\newtheorem{corollary}[theorem]{Corollary}
\newtheorem{proposition}[theorem]{Proposition}
\newtheorem{lemma}[theorem]{Lemma}
\newtheorem{claim}{}[theorem]
\newcolumntype{O}{>{{}}c<{{}}}
\newcommand{\del}{ \backslash}
\newcommand{\bF}{\mathbb F}
\newcommand{\bZ}{\mathbb Z}
\newcommand{\cR}{\mathcal{R}}
\newcommand{\cT}{\mathcal{T}}
\newcommand{\BB}[3]{\mathrm{BB}(#1,#2,#3)}
\newcommand{\PG}[2]{\mathrm{PG}(#1,#2)}
\newcommand{\trianglehg}{\mathbb{T}}
\author{Kazuhiro Nomoto}
\author{Jorn van der Pol}
\address{Department of Combinatorics and Optimization, University of Waterloo, Waterloo, Canada}
\title{Tuza's conjecture for binary geometries}
\begin{document}

\begin{abstract}
	Tuza (\textit{A conjecture}, in Proceedings of the Colloquia Mathematica Societatis Janos Bolyai, 1981) conjectured that $\tau(G) \le 2\nu(G)$ for all graphs $G$, where $\tau(G)$ is the minimum size of an edge set whose removal makes $G$ triangle-free, and $\nu(G)$ is the maximum size of a collection of pairwise edge-disjoint triangles.
	Here, we generalise Tuza's conjecture to simple binary matroids that do not contain the Fano plane as a restriction. We prove that the geometric version of the conjecture holds for cographic matroids.
\end{abstract}

\maketitle

\section{Introduction}

Let $G$ be a simple graph. A \emph{(triangle) packing} is a set of pairwise edge-disjoint triangles in $G$, and a \emph{(triangle) hitting set} is a set of edges that meets every triangle in $G$. We write $\nu(G)$ for the size of a maximum packing, and $\tau(G)$ for the size of a minimum hitting set in $G$. It is easily seen that $\nu(G) \le \tau(G) \le 3\nu(G)$ for all graphs $G$. Tuza conjectured that the factor~3 can be improved to~2.

\begin{conjecture}[Tuza's conjecture~\cite{Tuza1981}]\label{conj:tuza}
	Let $G$ be a simple graph; then $\tau(G) \le 2\nu(G)$.
\end{conjecture}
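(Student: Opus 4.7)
The plan is to combine LP duality with a structural analysis of extremal graphs. First I would introduce the fractional relaxations: let $\nu^*(G)$ be the maximum of $\sum_T x_T$ over assignments $x_T \in [0,1]$ to triangles $T$ subject to $\sum_{T \ni e} x_T \le 1$ for every edge $e$, and dually let $\tau^*(G)$ be the minimum fractional hitting number. LP duality gives $\nu(G) \le \nu^*(G) = \tau^*(G) \le \tau(G)$, so it would suffice to establish either $\tau(G) \le 2\nu^*(G)$ or $\tau^*(G) \le 2\nu(G)$; the two integrality gaps can in principle be attacked separately.

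For the structural part, I would pass to an edge-minimal counterexample $G$. Short local arguments show that every edge lies in a triangle (else delete and induct), that no triangle is ``isolated'' in the sense that its three edges lie only in it (else pack it, delete the three edges, and induct), and more generally that every edge-deletion drops $\nu$ or leaves $\tau$ small. Iterating such reductions should constrain $G$ to be highly triangle-dense, and one then tries to exhibit an integral packing of size at least $\tau(G)/2$, either greedily, or by a careful rounding of an optimal fractional packing, or via an augmenting-path/potential-function method in the style of Haxell.

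The main obstacle is that Tuza's conjecture has resisted every such direct attack since 1981 and is widely believed to require a genuinely new idea. Haxell's potential-function argument yields only $\tau(G) \le (3 - \varepsilon)\nu(G)$ for a small explicit $\varepsilon > 0$, and the fractional version $\tau^*(G) \le 2\nu(G)$ due to Krivelevich already requires delicate rounding. The tight examples $K_4$ and $K_5$, with $(\tau,\nu) = (2,1)$ and $(4,2)$ respectively, suggest that the extremal structure is governed by $K_4$-subgraphs, and a natural approach would be to prove a dichotomy: either $G$ contains many edge-disjoint $K_4$'s (each contributing $\nu \ge 1$ against $\tau \le 2$ locally), or $G$ is sufficiently $K_4$-sparse that a greedy packing already saturates half the hitting set. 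Formalising such a dichotomy is, to my knowledge, the key open step, and it is presumably the reason that the present paper attacks the matroid-theoretic generalisation, where the algebraic structure of binary matroids (and in particular the exclusion of the Fano plane) may supply leverage that is absent in the purely graph-theoretic problem.
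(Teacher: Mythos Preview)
The statement you are attempting to prove is Conjecture~\ref{conj:tuza}, which the paper states as an \emph{open conjecture} and does not prove. There is no proof in the paper to compare your proposal against; the paper's contribution is the cographic special case (Theorem~\ref{thm:cographic}), which recovers only the planar instance of Tuza's conjecture.

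Your write-up is not a proof either, and you essentially say so yourself: you outline the LP-duality framework, the standard minimal-counterexample reductions, and Haxell's potential-function method, and then correctly observe that none of these closes the gap to the constant~$2$. The sketched ``dichotomy'' on $K_4$-density is a heuristic, not an argument; you give no mechanism for turning many edge-disjoint $K_4$'s into a packing certifying $\nu(G) \ge \tau(G)/2$, nor for handling the $K_4$-sparse case. So the genuine gap is simply that the conjecture is open and your proposal does not supply the missing idea---which, to your credit, you acknowledge in the final paragraph.
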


If Tuza's conjecture is true, the constant~2 is best possible, as shown by the complete graphs~$K_4$ and~$K_5$.

Over the past 40 years, several special cases of Conjecture~\ref{conj:tuza} have been proven. It is now known that Conjecture~\ref{conj:tuza} holds for planar graphs~\cite{Tuza1990}, graphs without homeomorphic copy of~$K_{3,3}$~\cite{Krivelevich1995}, threshold graphs~\cite{Bonamy2021}, and a number of other graph classes. The full conjecture, however, remains wide open.

In this paper, we generalise Tuza's conjecture to the setting of simple binary matroids, that is, subsets of finite-dimensional binary projective spaces.

Unfortunately, Tuza's conjecture fails in general for simple binary matroids. The smallest counterexample is the Fano plane, $F_7 = \PG{2}{2}$, for which $\nu(F_7) = 1$ and $\tau(F_7) = 3$. A computer search among small binary matroids reveals that all simple binary matroids on at most~14 elements for which Tuza's conjecture fails contain a restriction isomorphic to the Fano plane. This inspires the following conjecture.

\begin{conjecture}\label{conj:geom}
	Let $M$ be a simple binary matroid that does not contain a restriction isomorphic to the Fano plane. Then $\tau(M) \le 2\nu(M)$.
\end{conjecture}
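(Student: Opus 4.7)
The baseline bound is the easy $\tau(M) \le 3\nu(M)$: if $P$ is a maximum triangle packing, then $\bigcup_{T \in P} T$ is a hitting set of size $3\nu(M)$, since by maximality of $P$ every triangle of $M$ meets some $T \in P$. The goal is to \emph{save} one element per packed triangle. The natural scheme is induction on $|E(M)|$: find a triangle $\{a, b, c\}$ and use $\tau(M) \le \tau(M \setminus \{a, b\}) + 2$ (automatic: add $\{a,b\}$ to any hitting set of the restriction) together with the inductive hypothesis $\tau(M \setminus \{a, b\}) \le 2\nu(M \setminus \{a, b\})$, which is valid since restriction preserves both simplicity and the no-Fano condition. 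Chained with the key inequality $\nu(M \setminus \{a, b\}) \le \nu(M) - 1$, this yields $\tau(M) \le 2\nu(M)$, so everything reduces to producing a triangle for which removing two of its elements strictly decreases the packing number.

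The crux is therefore a choice lemma: in a simple binary matroid with no Fano restriction and at least one triangle, there exist two elements $a, b$ contained in a common triangle such that every maximum triangle packing of $M$ uses $a$ or $b$. The hypothesis is essential: for $M = F_7$ every pair of elements lies in a unique triangle, and one verifies $\nu(F_7 \setminus \{a, b\}) = 1 = \nu(F_7)$ for every pair, correctly reflecting that $\tau(F_7) = 3 > 2 = 2\nu(F_7)$. To prove the lemma I would proceed by contradiction: if for every pair $\{a, b\}$ inside any triangle some maximum packing avoids $\{a, b\}$, then, starting from a fixed maximum packing $P$ and repeatedly swapping in alternate packings while tracking the elements and triangles introduced, I would aim to reconstruct a configuration of seven elements carrying seven triangles, that is, a Fano restriction of $M$.

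The principal obstacle I anticipate is precisely this choice lemma: turning the ``every swap is available'' hypothesis into an honest Fano configuration requires careful control of how alternate packings interact, and small-packing cases (such as $\nu(M) = 1$, where there is only one packed triangle to exchange across) likely need separate ad hoc arguments. A concrete fallback for the cographic subcase is to translate directly to graphs: triangles of $M^*(G)$ are the $3$-edge-cuts of the underlying $3$-edge-connected graph $G$, and the no-Fano hypothesis is automatic since $F_7$ is not cographic. The $3$-edge-cuts of $G$ organize into a well-understood cactus/tree structure via submodularity of the cut function, and I would induct on the block decomposition of this cactus, reducing the problem to a combinatorial packing-covering statement about cycles and the edges they share.
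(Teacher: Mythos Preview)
First, note that the statement is a \emph{conjecture}: the paper does not prove it in general (doing so would in particular settle Tuza's graph conjecture), and only establishes it for cographic matroids and a few geometric families.

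Your choice lemma is false, already for $M(K_4)$. This matroid is simple, binary, and Fano-free, with $\nu = 1$. By edge-transitivity of $K_4$, for any two edges $a,b$ lying in a common triangle the remaining four edges still contain a triangle, so $\nu(M(K_4)\setminus\{a,b\}) = 1 = \nu(M(K_4))$ for \emph{every} candidate pair. Your proposed contradiction---that ``every swap is available'' forces a Fano restriction---therefore cannot go through: in $M(K_4)$ every swap is indeed available, yet there are only six elements and no Fano. Nor is this a small-$\nu$ pathology to be patched ad hoc: the direct sum of $k$ copies of $M(K_4)$ is simple, binary, Fano-free with $\nu = k$, and every candidate pair lies in a single summand where the same obstruction applies. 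The inductive scheme thus collapses. Your cographic fallback is closer to what the paper actually does, though the paper (Lemma~\ref{thm:cographic_weighted}) does not use a cactus decomposition; it runs a minimum-weighted-counterexample argument in the style of Tuza's planar proof, showing that a smallest counterexample has no crown in its triangle hypergraph $\trianglehg(M)$ and then exhibiting one on the small side of a carefully chosen $3$-edge-cut.
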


Every simple graph $G$ has an associated simple binary matroid, the cycle matroid $M(G)$, with the property that $\tau(M(G)) = \tau(G)$ and $\nu(M(G)) = \nu(G)$. Graphic matroids do not have restrictions isomorphic to the Fano plane, so the statement of Conjecture~\ref{conj:geom} implies that of Conjecture~\ref{conj:tuza}.

Haxell~\cite{Haxell1999} showed that $\tau(G) \le \tfrac{66}{23}\nu(G)$ for simple graphs $G$. One reason we believe it is natural to consider Tuza's conjecture in the geometric setting is that her proof only uses that $M(G)$ does not contain a Fano-restriction and therefore generalises, mutatis mutandis, to the geometric setting.

\begin{theorem}[\cite{Haxell1999}]
	Let $M$ be a simple binary matroid without Fano-restriction; then $\tau(M) \le \tfrac{66}{23}\nu(M)$.
\end{theorem}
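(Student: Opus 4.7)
The plan is to follow Haxell's proof for graphs~\cite{Haxell1999} almost verbatim, replacing edges of $G$ by elements of $M$ and triangles of $G$ by three-element circuits of $M$, and to verify that each graph-theoretic ingredient she uses is either purely combinatorial (depending only on the triangle-packing/covering hypergraph) or follows from the absence of a Fano restriction. For graphic matroids the latter hypothesis is automatic because $F_7$ is not regular.

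I would begin by fixing a maximum triangle packing $\mathcal{P}$ of $M$ with $|\mathcal{P}| = \nu(M)$ and setting $C_0 := \bigcup_{T \in \mathcal{P}} T$. By the maximality of $\mathcal{P}$, every triangle of $M$ meets $C_0$, so $C_0$ is a hitting set of size $3\nu(M)$. The goal is to modify $C_0$ locally around each $T \in \mathcal{P}$ so that $T$ contributes on average at most $66/23$ elements, yielding $\tau(M) \le \bigl(3 - \tfrac{3}{23}\bigr)\nu(M) = \tfrac{66}{23}\nu(M)$.

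Haxell's strategy is to classify each $T \in \mathcal{P}$ according to the structure of the triangles of $M$ that meet $T$ but lie outside $\mathcal{P}$, and for each class to provide a local replacement of $T$ in the hitting set together with a weight; a linear-programming/charging analysis across the classes produces the factor $66/23$. Two binary-matroid facts suffice to recover the ingredients she uses. First, in a simple binary matroid any two distinct triangles share at most one element, because the symmetric difference of two binary circuits is a disjoint union of circuits and a simple binary matroid has no circuit of size at most two. Second, two triangles sharing one element span a rank-$3$ flat of $M$, which embeds in $\PG{2}{2}$ and therefore has at most seven elements; the no-Fano hypothesis forces it to have at most six. Consequently the local neighborhood of any $T \in \mathcal{P}$ inside one such flat is a binary matroid on at most six elements, and the finitely many possible configurations can be checked directly.

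The main obstacle I anticipate is one of careful bookkeeping rather than of idea: Haxell's case analysis is phrased in graph-theoretic language (common neighborhoods of vertices, bipartite auxiliary graphs attached to each triangle), and one must check class by class that every forbidden configuration she rules out, and every local replacement she performs, corresponds to a binary-matroid statement derivable from rank-$3$ structure under the no-Fano hypothesis. I do not expect a genuinely new idea to be required, but the translation must be done with care to ensure that no hidden appeal to a graph vertex, to planarity, or to some other graph-specific feature slips in.
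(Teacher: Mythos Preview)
Your proposal is correct and matches the paper's treatment exactly: the paper does not give a proof but simply asserts that Haxell's argument ``only uses that $M(G)$ does not contain a Fano-restriction and therefore generalises, mutatis mutandis, to the geometric setting.'' Your plan to translate her proof elementwise, checking that the two-triangle intersection bound and the local rank-$3$ configuration analysis follow from simplicity and the no-Fano hypothesis, is precisely the verification the paper is alluding to.
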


In this paper, we prove that Conjecture~\ref{conj:geom} holds for cographic matroids, i.e.\ matroids whose duals are graphic.

\begin{theorem}\label{thm:cographic}
	If $M$ is a cographic matroid, then $\tau(M) \le 2\nu(M)$.
\end{theorem}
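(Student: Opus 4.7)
My plan is to work with the primal graph. Writing $M=M^*(G)$, the triangles of $M$ are exactly the $3$-edge-cuts of $G$. I would first reduce, using additivity of $\tau$ and $\nu$ under direct sums and $2$-sums of binary matroids, to the case that $G$ is simple and $3$-edge-connected. The theorem then becomes the statement that the minimum number of edges meeting every $3$-edge-cut of $G$ is at most twice the maximum number of pairwise edge-disjoint $3$-edge-cuts.

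The central structural tool would be the cactus representation of minimum cuts of Dinitz--Karzanov--Lomonosov. Because $\lambda(G)=3$ is odd, the cactus is a tree $T$ whose edges are in bijection with the $3$-edge-cuts of $G$; the set $V(G)$ is partitioned across the nodes of $T$, and each edge $uv\in E(G)$ traces the unique $T$-path $P_{uv}$ between the nodes containing $u$ and $v$. An edge $uv$ lies in the $3$-cut of a tree-edge $f$ iff $f\in P_{uv}$, and each $f$ lies on exactly three such paths. In this language, $\tau$ is the minimum number of the paths $P_{uv}$ whose union is $E(T)$, and $\nu$ is the largest set of tree-edges such that no $P_{uv}$ contains two of them.

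I would then induct on $|E(T)|$. In the easy case of an \emph{isolated} $3$-cut, that is, a tree-edge $f$ whose three paths all have length~$1$, equivalently a $3$-cut sharing no edge with any other $3$-cut, contracting any one of its three $G$-edges destroys only that cut and reduces $\tau$ and $\nu$ each by exactly~$1$, closing the induction. In the general case I would focus on a leaf $\ell$ of $T$ and aim to find two $G$-edges whose simultaneous contraction destroys at least two $3$-cuts (including the one at $\ell$) while removing at most one element from a fixed optimal packing; the induction would then give $\tau(G)\le 2+2(\nu(G)-1)=2\nu(G)$. The laminar structure of $3$-cuts forced by the odd connectivity $\lambda=3$ is what should allow such a local modification at $\ell$ to decouple cleanly from the rest of $T$.

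The main obstacle is precisely the bookkeeping in the non-isolated case: a single $G$-edge can lie in many $3$-cuts (as many as the length of its $T$-path), so the two edges to be contracted must be chosen carefully enough to destroy enough cuts while not collapsing multiple packing elements at once. As a sanity check, the tight example $G=K_4$ has $T=K_{1,4}$ with $\nu=1$ and $\tau=2$; more generally, when $T$ is a star the problem reduces to proving $|V|/2\le 2\alpha$ for cubic $3$-edge-connected multigraphs, which follows from Brooks' theorem (with $K_4$-components being the tight instance) together with Petersen's perfect matching theorem for bridgeless cubic graphs.
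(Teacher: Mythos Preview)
Your route is genuinely different from the paper's. The paper proves the weighted statement by a minimal-counterexample argument that adapts Tuza's original planar proof: it shows a minimal counterexample can contain no \emph{crown} in the triangle hypergraph $\trianglehg(M)$, then finds a non-vertex triad $T$ with $|E(G_1(T))|$ minimal, shows $G_1(T)$ has minimum degree $\ge 2$, and observes that the vertex-triads along a shortest cycle of $G_1(T)$ form a crown --- a contradiction. No cactus, no explicit induction on the number of $3$-cuts.

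That said, your proposal is a plan, not a proof, and the gap is exactly where you place it. The entire content of the theorem lives in the ``non-isolated leaf'' step, which you state only as an aim: find two edges whose contraction kills at least two $3$-cuts while removing at most one packing element. Nothing you have written forces this. Contracting an edge $e$ destroys every $3$-cut along $P_e$ but simultaneously creates $2$-cuts in $G/e$, so the inductive hypothesis does not apply directly to $G/e$; and even ignoring that, to get $\nu(G)\ge\nu(G')+1$ from an optimal packing $\cT'$ in $G'$ you need some destroyed $3$-cut to be edge-disjoint from every member of $\cT'$, which the local picture at a leaf does not guarantee (the third edge of the leaf cut can lie in many other $3$-cuts and hence in $\cT'$). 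Your own sanity check already hints at the difficulty: for a star cactus you only treat the special case where every leaf carries a single vertex and the centre is empty (so $G$ is cubic); in general both the centre and the leaves may carry arbitrary vertex sets, and the Brooks--Petersen argument does not apply.

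Two smaller points worth tightening. Triangles of $M^*(G)$ are $3$-element \emph{bonds}, not arbitrary $3$-edge cuts; this only coincides once $G$ is $3$-edge-connected, so the reduction must come first. And the ``additivity under $2$-sums'' you invoke is not literally $\tau(M_1\oplus_2 M_2)=\tau(M_1)+\tau(M_2)$: a $2$-edge-cut in $G$ yields parallel elements in $M^*(G)$, so what you are really doing is passing to the weighted problem, which is exactly the move the paper makes explicit in Lemma~\ref{thm:cographic_weighted}.
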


Theorem~\ref{thm:cographic} does not require the matroid $M$ to be simple. The notation $\tau(M)$ and $\nu(M)$ generalises to non-simple binary matroids in the obvious way (Tuza's conjecture for multigraphs was considered in~\cite{Chapuy2014}).

Whitney's planarity criterion~\cite{Whitney1932} asserts that a graph is planar if and only if its associated graphic matroid is cographic. Thus, Theorem~\ref{thm:cographic} implies that Tuza's conjecture holds for planar graphs, which was originally proved by Tuza.
\begin{corollary}[\cite{Tuza1990}]
	Let $G$ be a planar graph; then $\tau(G) \le 2\nu(G)$.
\end{corollary}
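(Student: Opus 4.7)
The plan is to observe that this corollary is a direct consequence of Theorem~\ref{thm:cographic} combined with Whitney's planarity criterion, so there is essentially no new work to do; the proof is a two-line chain of implications.

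First, I would recall Whitney's criterion~\cite{Whitney1932}: a graph $G$ is planar if and only if its cycle matroid $M(G)$ is cographic. Hence, for any planar graph $G$, the matroid $M(G)$ lies in the class to which Theorem~\ref{thm:cographic} applies. Second, I would invoke the identifications $\tau(M(G)) = \tau(G)$ and $\nu(M(G)) = \nu(G)$ noted earlier in the introduction, which follow because the triangles (three-element circuits) of $M(G)$ are precisely the triangles of $G$. Applying Theorem~\ref{thm:cographic} then yields $\tau(G) = \tau(M(G)) \le 2\nu(M(G)) = 2\nu(G)$.

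There is no real obstacle here: the content of the corollary is entirely absorbed into Theorem~\ref{thm:cographic}, and the role of this statement in the paper is to make explicit that the main result recovers Tuza's planar case~\cite{Tuza1990}. The only point worth being careful about is that Theorem~\ref{thm:cographic} is stated without a simplicity hypothesis, so no reduction from a simple planar graph to a general cographic matroid is needed; the implication is immediate.
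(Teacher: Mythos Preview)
Your proposal is correct and matches the paper's own reasoning exactly: the paper derives the corollary directly from Theorem~\ref{thm:cographic} via Whitney's planarity criterion and the identifications $\tau(M(G))=\tau(G)$, $\nu(M(G))=\nu(G)$. There is nothing to add.
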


An even stronger generalization of Tuza's conjecture appears as Problem~1.8 in~\cite{AharoniZerbib2020}. Aharoni and Zerbib ask if, in a 3-uniform hypergraph without a tent-subgraph, the size of a minimum cover is at most twice the size of a maximum matching. (Here, a tent is the hypergraph on vertex set $\{1,2,\ldots,7\}$ with hyperedges $\{1,2,3\}$, $\{1,4,5\}$, $\{1,6,7\}$, and $\{3,5,7\}$.) The 3-uniform hypergraph on the elements of a simple binary matroid whose hyperedges are the triangles of the matroid is tent-free if and only if the matroid does not have a restriction isomorphic to the Fano-plane; thus a positive answer to the problem formulated by Aharoni and Zerbib implies a positive answer to Conjecture~\ref{conj:geom}. We thank Penny Haxell for pointing us to the paper~\cite{AharoniZerbib2020}.

\section{Preliminaries and notation}

\subsection{Simple binary matroids}

A simple binary matroid is a pair $M = (E,P)$, where $P = \PG{n-1}{2}$ is a finite-dimensional binary projective geometry of dimension $n-1$ and $E \subseteq P$. The rank of $M$ is~1 plus the dimension of the largest subgeometry of $P$ that contains $E$.

For matroids $M = (E_1,P_1)$ and $N=(E_2,P_2)$ we say that $M$ contains $N$ as a restriction if there exists a linear injection $\varphi\colon P_2 \rightarrow P_1$ such that $\varphi(E_2) \subseteq E_1$. We say that $M$ is $N$-free is $M$ does not contain $N$ as a restriction. We say that $M$ and $N$ are \emph{isomorphic} if there exists a linear bijection $\varphi\colon P_1 \rightarrow P_2$ such that $\varphi(E_1) = E_2$.

This definition of simple binary matroid is essentially the same as the standard definition of such matroids, except that our matroids are equipped with an extrinsic ambient space.

We abuse notation and write $\PG{n-1}{2}$ for both the $(n-1)$-dimensional binary projective geometry and the corresponding rank-$n$ simple binary matroid $(E,P)$ with $E = P = \PG{n-1}{2}$. We refer to the matroids $\PG{1}{2}$ and $\PG{2}{2} =: F_7$ as the \emph{triangle} and the \emph{Fano plane}, respectively.

\subsection{Graphic and cographic matroids}

Let $G$ be a graph, let $A$ be its vertex-edge incidence matrix, and write $n$ for the rank of~$A$. Let $A'$ be obtained from $A$ by restriction to a subset of its rows that is a basis of its row space $\cR(A)$.

The points of $P = \PG{n-1}{2}$ can be identified with the nonzero binary vectors in $\bF_2^n$. Let $E$ be the subset of $\PG{n-1}{2}$ formed by the columns of $A'$, then $(E, P)$ is a simple binary matroid. Although in our formalism $M(G)$ depends on the choice of $A'$, all such choices yields isomorphic matroids, and we will write $M(G)$ for the resulting corresponding matroid. More generally, a matroid is called \emph{graphic} if it can be obtained from a graph in this way.

In a similar fashion, if $A''$ is a matrix whose rows form a basis of the orthogonal complement $\cR(A)^\perp$, we can use the columns of $A''$ to define a matroid $M^*(G)$ (as in the case of $M(G)$, this matroid is unique up to isomorphism), and we call a matroid \emph{cographic} if it can be obtained from a graph in this way.

Triangles in $M(G)$ correspond to triangles in $G$, while triangles in $M^*(G)$ correspond to minimal edge cuts (bonds) of cardinality~3 in $G$ (which we will also call \emph{triads}).

\subsection{Packings and hitting sets}

Let $M = (E,P)$ be a simple binary matroid. A \emph{(triangle) packing} of $M$ is a collection of disjoint triangles contained in $M$; a \emph{(triangle) hitting set} of $M$ is a subset $X \subseteq E$ such that $(E\setminus X, P)$ is triangle-free. We write $\nu(M)$ for the maximum size of a triangle packing in $M$, and $\tau(M)$ for the minimum size of a triangle hitting set.

Alternatively, the parameters $\nu(M)$ and $\tau(M)$ can be formulated as the objective value of integer programmes. Write $\cT(M)$ for the collection of triangles of~$M$, then
\begin{equation*}
	\nu(M) = \max\left\{\sum_{T \in \cT(M)} x_T: \begin{array}{ll}\sum_{T \ni e} x_T \le 1 & \forall e \in E \\[1ex] \hphantom{\sum_{T \ni e}{}} x \in \bZ_{\ge 0}^{\cT(M)} &\end{array}\right\},
\end{equation*}
and
\begin{equation*}
	\tau(M) = \min\left\{\sum_{e \in E} y_e : \begin{array}{ll}\sum_{e \in T} y_e \ge 1 & \forall T \in \cT(M) \\[1ex] \hphantom{\sum_{e \in T}{}}y \in \bZ_{\ge 0}^E & \end{array}\right\}.
\end{equation*}

It is easily verified that $\nu(M(G)) = \nu(G)$ and $\nu(M(G)) = \tau(G)$ for any graph~$G$.

\subsection{Weighted binary matroids}

We generalise the notion of weighted graphs, as discussed by Chapuy et al.~\cite{Chapuy2014}, to matroids. Let $M = (E,P)$ be a simple binary matroid, and let $w\colon E \rightarrow \bZ_{\geq 0}$ be a weight function; we refer to the pair $(M,w)$ as a weighted binary matroid. The parameters $\nu(M)$ and $\tau(M)$ are easily generalised to the weighted setting:
\begin{equation*}
	\nu_w(M) = \max\left\{\sum_{T \in \cT(M)} x_T: \begin{array}{ll}\sum_{T \ni e} x_T \le w(e) & \forall e \in E \\[1ex] \hphantom{\sum_{T \ni e}{}}x \in \bZ_{\ge 0}^{\cT(M)} &\end{array}\right\},
\end{equation*}
and
\begin{equation*}
	\tau_w(M) = \min\left\{\sum_{e \in E} w_ey_e : \begin{array}{ll}\sum_{e \in T} y_e \ge 1 & \forall T \in \cT(M) \\[1ex] \hphantom{\sum_{e \in T}{}}y \in \bZ_{\ge 0}^E & \end{array}\right\}.
\end{equation*}

If $w_e = 1$ for all $e \in E$, then $\nu_w(M) = \nu(M)$ and $\tau_w(M) = \tau(M)$. The weighted versions of these parameters allow us to talk about binary matroids that may contain non-trivial parallel classes.
When $M = (E,P)$ and $w\colon E\rightarrow \bZ_{\geq 0}$ is a weight function, we can define a related weight function $w'\colon P \rightarrow \bZ_{\geq 0}$ by setting $w'(e) = w(e)$ if $e \in E$ and $w'(e) = 0$ otherwise. In that case, $\tau_{w'}(P) = \tau_w(M)$ and $\nu_{w'}(P) = \nu_w(M)$; thus, we may always assume that $E = P$.

\section{Cographic matroids}

In this section we prove Theorem~\ref{thm:cographic}.

\subsection{3-uniform hypergraphs}

Let $M = (E,P)$ be a simple binary matroid. It will be useful to encode the triangles of $M$ as a hypergraph $\trianglehg(M)$ on vertices $E$, in which a 3-set $T \in \binom{E}{3}$ forms a hyperedge if and only if $T$ is a triangle of $M$. The hypergraph $\trianglehg(M)$ is clearly 3-uniform, and as two triangles in $M$ intersect in at most one point, the hypergraph is linear as well. In terms of $\trianglehg(M)$, $\nu(M)$ is the size of a maximum matching in $\trianglehg(M)$, while $\tau(M)$ is the size of a minimum cover in $\trianglehg(M)$.

We will need the following standard result on hypergraphs.
\begin{lemma}\label{lemma:degree2_implies_linear_cycle}
Let $H=(V,E)$ be a $3$-uniform linear hypergraph in which the minimum degree is at least $2$. Then $H$ contains a linear cycle.
\end{lemma}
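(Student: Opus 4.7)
The plan is to take a longest linear path in $H$ and close it into a linear cycle using the minimum degree hypothesis. Call a sequence $(e_1,\ldots,e_\ell)$ of distinct hyperedges of $H$ a \emph{linear path} if $|e_i \cap e_{i+1}| = 1$ for each $i < \ell$ and $e_i \cap e_j = \emptyset$ whenever $|i-j| \ge 2$. Since $H$ has a hyperedge and the minimum degree is at least~$2$, one easily finds a linear path of length at least~$2$; let $(e_1,\ldots,e_\ell)$ be one of maximum length.

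Next, pick a vertex $v \in e_\ell \setminus e_{\ell-1}$ (it exists since $|e_\ell| = 3$ and $|e_\ell \cap e_{\ell-1}| = 1$). By the minimum degree condition, some hyperedge $f \ne e_\ell$ contains $v$. Observe that $f$ cannot be disjoint from every $e_i$ with $i < \ell$: otherwise, using linearity to get $f \cap e_\ell = \{v\}$, the sequence $(e_1,\ldots,e_\ell,f)$ would be a linear path of length $\ell+1$, contradicting maximality. So there exists $i < \ell$ with $f \cap e_i \ne \emptyset$; take $i$ as large as possible. I then claim $(e_i,e_{i+1},\ldots,e_\ell,f)$ is a linear cycle.

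To verify the claim, I would check three things. First, consecutive hyperedges share a single vertex: $|e_j \cap e_{j+1}| = 1$ for $i \le j < \ell$ by the path structure, while $f \cap e_\ell = \{v\}$ and $|f \cap e_i| = 1$ (by linearity and nonempty intersection). Second, non-consecutive hyperedges are disjoint: pairs within the path are disjoint by the definition of linear path, while $f \cap e_j = \emptyset$ for $i < j < \ell$ by the maximal choice of $i$. Third, the junction vertices are distinct — here one uses that $v \notin e_j$ for any $j < \ell$ (since $v \in e_\ell$ and $e_\ell$ is disjoint from $e_j$ for $j \le \ell-2$, and $v \notin e_{\ell-1}$ by choice), and that the vertex $u \in f \cap e_i$ cannot coincide with any path junction $e_j \cap e_{j+1}$ without forcing $f$ to meet some $e_{j+1}$ with $j + 1 > i$, violating maximality of~$i$.

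The only slightly delicate point — and thus the main obstacle — is the distinctness check for the junction vertex $u$; everything else is a direct consequence of linearity and the maximal choices of $\ell$ and $i$. Once distinctness is confirmed, the cycle has length $\ell - i + 2 \ge 3$, completing the proof.
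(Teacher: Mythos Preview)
Your proposal is correct and follows essentially the same argument as the paper: take a maximal linear path, use the degree condition at an endpoint to find an extra edge, and invoke maximality to force that edge back into the path, closing a linear cycle. The only differences are cosmetic---you work from the terminal edge $e_\ell$ and pick the \emph{largest} index where $f$ re-enters, whereas the paper works from $e_1$ and picks the smallest such index---and you spell out the junction-vertex distinctness check that the paper leaves implicit.
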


\begin{proof}
Let $P = \{e_1, \dots, e_t\}$ be a maximal linear path in $H$. Pick $v \in e_1 - e_2$. Since the degree of $v$ is at least $2$, there is an edge $e_0$ for which $e_0 \cap e_1 = \{v\}$. By maximality of $P$, there exists some $j \in \{2, \dots, t\}$ such that $|e_0 \cap e_j| \neq 0$. Take the smallest such $j$. Then $\{e_0,e_1,\dots, e_j\}$ forms a linear cycle.
\end{proof} 

A \emph{crown of size $k$} is a linear cycle on vertices $\{e_1,\dots,e_k, f_1,\dots,f_k\}$ with edges $\{e_i,f_i, e_{i+1}\}$, $i \in [k]$ (where we identify $k+1$ with $1$), with the additional property that these are the only edges in which the $e_i$ are contained. Crowns were introduced as an inductive tool in~\cite[Lemma 2]{Tuza1990}.

\subsection{Proof of Theorem~\ref{thm:cographic}}

We prove the following reformulation of Theorem~\ref{thm:cographic}. We remark that Claims~\ref{claim:at_least_two_triangles} and~\ref{claim:two_triangles} are just Properties~(a)--(c) in~\cite[Section~3]{Chapuy2014} adapted to our context, and follow from the same argument.

A note on language: In the proof of the following lemma, we consider both a simple cographic matroid $M$ and a graph $G$ such that $M = M^*(G)$. We use the terms ``triangle of $M$'', ``triad in $G$'', and ``hyperedge of $\trianglehg(M)$'' interchangeably, depending on the context.

\begin{lemma}\label{thm:cographic_weighted}
	Every weighted simple cographic matroid $(M,w)$ satisfies $\tau_w(M) \le 2\nu_w(M)$.
\end{lemma}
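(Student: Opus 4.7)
The plan is to run an induction on the total weight $\sum_{e \in E} w(e)$. Fix a graph $G$ with $M = M^*(G)$ and recast everything in terms of triads: the hyperedges of $\trianglehg(M)$ are the $3$-element bonds of $G$. The base case is when no triad has all three edges of positive weight, so $\nu_w(M) = \tau_w(M) = 0$.

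I would begin by performing the routine reductions the authors forecast as Claim~\ref{claim:at_least_two_triangles} and Claim~\ref{claim:two_triangles}. Edges with $w(e)=0$ or lying in no positively-weighted triad may be ignored. If some positively-weighted edge lies in only one triad $T$ all of whose elements have positive weight, then packing $T$ once and decrementing $w$ by one on each of its elements shrinks $\nu_w$ by exactly one and $\tau_w$ by at most two, so the inductive hypothesis closes the case. Analogous moves, mirroring those of Chapuy et al.~\cite{Chapuy2014}, should leave us in the situation in which every positively-weighted edge lies in at least two positively-weighted triads.

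The positive-weight sub-hypergraph then satisfies the hypothesis of Lemma~\ref{lemma:degree2_implies_linear_cycle} and therefore contains a linear cycle; minimising over such cycles extracts a crown $\{e_1,\dots,e_k,f_1,\dots,f_k\}$ with triads $T_i=\{e_i,f_i,e_{i+1}\}$ in which each $e_i$ belongs to no other positively-weighted triad. Decreasing the weights on the $e_i$ and $f_i$ appropriately and applying the inductive hypothesis to the residual matroid, the hitting-set side grows by at most $k$ (one per $f_i$) while the packing side grows by $\lfloor k/2\rfloor$ from an alternating selection of the $T_i$. For even $k$ this balances, and the bound $\tau_w\le 2\nu_w$ goes through.

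The principal obstacle is the odd-$k$ case, and especially $k=3$, where the crown is a triangle of triads meeting pairwise in the single edges $e_1,e_2,e_3$ and the naive bookkeeping leaves a deficit of one. This is where the cographic hypothesis must do work: three triads of $G$ that pairwise share exactly one edge impose strong local constraints on $G$ (since bonds form a linear subspace, the symmetric differences of the $T_i$ must decompose as sums of bonds, and three bonds of size $3$ cannot overlap arbitrarily in a graph), and I would aim to exploit this either to pack one extra triad on the $f_i$-side or to produce a hitting set for the crown of size strictly less than $k$. A secondary technical point is to check that every reduction respects the weighted LP definitions of $\nu_w$ and $\tau_w$; this parallels the handling in~\cite{Chapuy2014}.
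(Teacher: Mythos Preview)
Your outline has the right architecture (minimal counterexample / induction, the Chapuy--DeVos--McDonald--Pivotto--Rollin reductions, then a crown argument), but it misplaces where the cographic hypothesis is actually needed, and as a result contains a genuine gap.

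The gap is the sentence ``minimising over such cycles extracts a crown''. A shortest linear cycle in a $3$-uniform linear hypergraph of minimum degree~$2$ is \emph{not} in general a crown: the corner vertices $e_i$ may well lie in further hyperedges that simply do not close off shorter cycles. The paper spends most of its effort precisely here, and this is where cographicness is used. It distinguishes \emph{vertex-triangles} $\delta_G(v)$ from non-vertex triads, shows (Claim~\ref{claim:has_non_vertex_triangle}) that if every triad were a vertex-triangle then every element would have hypergraph-degree at most~$2$ and the linear cycle from Lemma~\ref{lemma:degree2_implies_linear_cycle} \emph{would} be a crown, and otherwise picks a non-vertex triad $T$ with $|E(G_1(T))|$ minimal. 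A rank computation (Claim~\ref{claim:triangle-left-right}) confines all other triads to one side of $T$, and minimality then forces all triads meeting $G_1(T)$ (besides $T$) to be vertex-triangles; a shortest cycle of $G_1(T)$ therefore yields a crown of vertex-triangles. None of this is visible in your sketch.

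Conversely, the odd-$k$ crown is \emph{not} where the difficulty lies, and it is resolved without any further appeal to cographic structure. The paper's reduction set for $k=2q+1$ is $X=\{e_1,e_3,\dots,e_{2q+1}\}\cup\{f_1,f_3,\dots,f_{2q-1}\}$, of size $2q+1$. If a minimal hitting set $R$ for the reduced weights contains all of $X$, then $R\setminus\{e_1\}$ is still a hitting set, because the only triads through $e_1$ are $\{e_1,f_1,e_2\}$ and $\{e_{2q+1},f_{2q+1},e_1\}$, and these are already hit by $f_1\in X$ and $e_{2q+1}\in X$ respectively. Hence $|R\cap X|\le 2q$, and the bookkeeping closes exactly as in the even case. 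Your proposed use of symmetric differences of bonds to squeeze an extra packed triad is unnecessary.
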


\begin{proof}
	Suppose that the lemma fails. Let $(M,w)$ be a counterexample for which $|E(M)| + w(E(M))$ is as small as possible. In the remainder of the proof, we write $E$ for $E(M)$. Let $G$ be a graph such that $M = M^*(G)$ for which $|V(G)|$ is as small as possible; as $M$ is simple, every edge-cut of $G$ has size at least~3. Small cases are easily checked, so we may assume that $|V(G)| \ge 3$.
	\begin{claim}\label{claim:at_least_two_triangles}
		For every $e \in E$: $w(e) \ge 1$ and $e$ is contained in at least two triangles of $M$.
	\end{claim}
	\begin{proof}[Proof of claim]
		If $w(e) = 0$ or $e$ is not contained in any triangle of $M$, then $(M',w')$ is a smaller counterexample, where $M' = M\del e$ and $w'$ is the restriction of $w$ to $E(M)-\{e\}$.
		So $w(e) \ge 1$. If $e$ is contained in exactly one triangle of $M$, say, $\{e,f,f'\}$, define a weight function $w'$ by setting $w'(x) = w(x) - 1$ for $x \in \{e,f,f'\}$ and $w'(x) = w(x)$ otherwise. Let $R$ be a minimal hitting set of $(M,w')$ with $w'(R) = \tau_{w'}(M)$; by minimality of $R$, $|R \cap \{e,f,f'\}| \le 2$, so $w(R) \le w'(R) + 2$. It follows that
		\begin{equation*}
			\tau_w(M) \le w(R) \le w'(R) + 2 = \tau_{w'}(M) + 2 \le 2\nu_{w'}(M) + 2 \le 2\nu_w(M),
		\end{equation*}
		which contradicts that $(M,w)$ is a counterexample.
	\end{proof}
	\begin{claim}\label{claim:two_triangles}
		If $e$ is in exactly two triangles of $M$, then $w(e) = 1$.
	\end{claim}
	\begin{proof}[Proof of claim]
		In view of the previous claim, it suffices to show that $w(e) \le 1$.
		Suppose, for the sake of contradiction, that $e$ is in exactly two triangles, and $w(e) \ge 2$. Call the two triangles $\{e,f,f'\}$ and $\{e,g,g'\}$. Consider the weighted simple cographic matroid $(M,w')$, where $w'(e) = w(e)-2$, $w'(x) = w(x)-1$ for all $x \in \{f,f',g,g'\}$, and $w'(x) = w(x)$ for all $x \in E(M)-\{e,f,f',g,g'\}$. Let $R$ be a minimal hitting set of $(M,w')$, so $w'(R) = \tau_{w'}(M)$. By minimality of $R$, if $R$ contains an element from $\{f,f'\}$ and an element from $\{g,g'\}$, then it does not contain $e$. It follows that $w(R) \le w'(R) + 4$, and hence that
		\begin{equation*}
			\tau_w(M) \le w(R) \le w'(R) + 4 = \tau_{w'}(M) + 4 \le 2\nu_{w'}(M) + 4 \le 2\nu_w(M),
		\end{equation*}
		which contradicts that $(M,w)$ is a counterexample.
	\end{proof}
	We now prove some basic properties of the graph~$G$.
	\begin{claim}\label{claim:conn}
		$G$ is 2-connected.
	\end{claim}
	\begin{proof}[Proof of claim]
		By minimality of $|V(G)|$, $G$ has no isolated vertices. If $G$ is not 2-connected, then $M$ is disconnected. It follows that $M$ has at least one component $M|X$, $X\subseteq E(M)$, such that the lemma already fails for $(M|X, w|X)$, which contradicts minimality of $(M,w)$.
	\end{proof}
	\begin{claim}
		$G$ is a simple graph.
	\end{claim}
	\begin{proof}[Proof of claim]
		Loops in $G$ are not contained in cuts, so we may assume that $G$ has no loops.
		Suppose, for the sake of contradiction, that $e$ and $e'$ are distinct parallel edges in $G$ and let $F$ be the maximal set of parallel edges containing both $e$ and $e'$. For each triad $T$ of $G$, either $T \cap F = \emptyset$, or $F \subseteq T$. It follows that if $|F| \ge 3$, then $e$ is contained in at most one triad of $G$, contradicting Claim~\ref{claim:at_least_two_triangles}. So we may assume that $F = \{e, e'\}$. By Claim~\ref{claim:at_least_two_triangles}, there are distinct elements $t$ and $t'$ such that $F\cup\{t\}$ and $F\cup\{t'\}$ are both triads of $G$. It follows that $\{t,t'\}$ contains a cut of~$G$, contradicting simplicity of~$M$.
	\end{proof}
	\begin{claim}\label{claim:crown}
		$\trianglehg(M)$ does not contain a crown.
	\end{claim}
	\begin{proof}[Proof of claim]
		Suppose, for the sake of contradiction, that $\trianglehg(M)$ contains a crown of size $k$, say $\{e_1, \ldots, e_k; f_1, \ldots, f_k\}$. By Claim~\ref{claim:two_triangles}, $w(e_i) = 1$ for all $i \in [k]$.
		
		If $k$ is even, say $k = 2q$, let $X = \{e_1, e_3, \dots, e_{2q-1}\} \cup \{f_1, f_3, \dots, f_{2q-1}\}$. Let $w'(x) = w(x) - 1$ for $x \in X$ and $w'(x) = w(x)$ otherwise. Let $R$ be a minimal hitting set such that $w'(R) = \tau_{w'}(M)$. Note that $w(R) = w'(R) + |R \cap X| \leq w'(R) + 2q$. Note also that $\nu_{w'}(M) + q \leq \nu_w(M)$: let $\cT'$ be a set of (not necessarily distinct) triangles of $M$ for which $\nu_{w'} (M) = |\cT'|$, then $\cT = \cT' \cup \{ \{e_i, f_i, e_{i+1}\} :  \text{$i$ is odd}   \}$ certifies that $\nu_{w} (M) \geq \nu_{w'}(M) + q$. It follows that
		\begin{equation*}
			\tau_w(M) \le w(R) \le w'(R) + 2q = \tau_{w'}(M) + 2q \le 2\nu_{w'}(M) + 2q \le 2\nu_w(M),
		\end{equation*}
		which contradicts that $(M,w)$ is a counterexample.
		
		If $k$ is odd, say $k = 2q + 1$, let $X = \{e_1, e_3, \dots, e_{2q + 1}\} \cup \{f_1, f_3, \dots, f_{2q-1}\}$; note that $|X| = 2q+1$. Let $w'(x) = w(x) - 1$ for $x \in X$ and $w'(x) = w(x)$ otherwise. Let $R$ be a minimal hitting set such that $w'(R) = \tau_{w'}(M)$. Note that $w(R) = w'(R) + |R \cap X|$. If $X \subseteq R$, then we may replace $R$ with $R \del \{e_1\}$ as $R \del \{e_1\}$ remains a hitting set. Hence we may assume that $|R \cap X| \leq 2q$, and therefore $w(R) \leq w'(R) + 2q$. Note also that $\nu_{w'}(M) + q \leq \nu_w(M)$; let $\cT'$ be a set of (not necessarily distinct) triangles of $M$ for which $\nu_{w'} (M) = |\cT'|$, then $\cT = \cT' \cup \{ \{e_i, f_i, e_{i+1}\} :  \text{$i$ is odd and $i \leq 2q-1$}   \}$ certifies that $\nu_{w} (M) \geq \nu_{w'}(M) + q$. It follows as before that
		\begin{equation*}
			\tau_w(M) \le w(R) \le w'(R) + 2q = \tau_{w'}(M) + 2t \le 2\nu_{w'}(M) + 2q \le 2\nu_w(M),
		\end{equation*}
		which contradicts that $(M,w)$ is a counterexample.
	\end{proof}
For $v \in V(G)$, write $\delta(v)$ for the set of edges incident with $v$. Call a triangle $T$ of $M$ a vertex-triangle if $T = \delta(v)$ for some $v \in V(G)$, and a non-vertex-triangle otherwise.
	\begin{claim}\label{claim:has_non_vertex_triangle}
		$M$ has a non-vertex-triangle.
	\end{claim}
	\begin{proof}[Proof of claim]
		By Claim~\ref{claim:at_least_two_triangles} and Lemma~\ref{lemma:degree2_implies_linear_cycle}, $\trianglehg(M)$ contains a linear cycle; let $C$ be such a cycle. If every triangle in $M$ is a vertex-triangle, then every element of~$M$ is in at most two triangles, and hence the maximum degree in $\trianglehg(M)$ is~2. This implies that $C$ is a crown, which contradicts Claim~\ref{claim:crown}.
	\end{proof}
Given a triangle $T$ of $M$, denote by $G_1(T)$ and $G_2(T)$ the two connected components of $G\del T$; we may assume that $|E(G_1(T))| \le |E(G_2(T))|$. 	Among all non-vertex-triangles, let $T$ be one for which $|E(G_1(T))|$ is as small as possible, and write $X = E(G_1(T))$. As $T$ is not a vertex-triangle, the set $X$ is non-empty. 
	\begin{claim}\label{claim:triangle-left-right}
		Every triangle of $M$ is contained in $E(G_1(T)) \cup T$ or in $E(G_2(T)) \cup T$.
	\end{claim}
	\begin{proof}[Proof of claim]
		The claim clearly holds for $T$. Let $S \neq T$ be a triangle of $M$. Since $M$ is simple and binary, it follows that $|S \cap T| \le 1$ and $r_M(S \cup T) = 4 - |S\cap T|$. By Claim~\ref{claim:conn}, $M$ is connected and hence~$r(M) = |E|-|V(G)|+1$. It follows that
		\begin{equation*}
			r_{M(G)}(E \setminus (S\cup T))
			= r_M(S\cup T) + |E\setminus(S\cup T)| - r(M)
			= |V(G)|-3,
		\end{equation*}
		so the graph $G\setminus(S\cup T)$ has three connected components; thus there exists $i \in \{1,2\}$ such that $G_i(T)\setminus S$ is connected, while $G_{3-i}(T)\setminus S$ is not.
		
		Suppose, for the sake of contradiction, that the claim fails for $S$. As $|S \cap T| \le 1$, clearly $1 \le |E(G_j(T))\cap S| \le 2$ for $j \in \{1,2\}$.
		
		If $|E(G_i(T))\cap S| = 1$, then, since $G_i(T)\setminus S$ is connected, the unique element in $E(G_i(T)) \cap S$ is contained in a cycle of $G_i(T)$; in this case, $G$ has a cut and a cycle that intersect in a single element: a contradiction, so $|E(G_i(T))\cap S| = 2$ and consequently $|E(G_{3-i}(T))\cap S| = 1$.
		
		Let $s$ be the unique element in $E(G_{3-i}(T))\cap S$. As
		\begin{equation*}
			\begin{split}
				|V(G)|-3
					&= r_{M(G)}(E \setminus (T\cup\{s\})) \\
					&= r_M(T \cup \{s\}) + |E\setminus(T \cup \{s\})| - r(M) \\
					&= r_M(T \cup \{s\}) + |V(G)| - 5,
			\end{split}
		\end{equation*}
		we must have that $r_M(T \cup \{s\}) = 2$, which contradicts that $M$ is simple.
	\end{proof}
	\begin{claim}\label{claim:min_degree}
		The minimum degree in~$G_1(T)$ is at least~2.
	\end{claim}
	\begin{proof}[Proof of claim]
		Suppose, for the sake of contradiction, that $G_1(T)$ contains a vertex~$v$ of degree at most~1.
		Let $(\alpha, \beta) = (d_{G_1(T)}(v), d_G(v))$. Clearly, $\alpha \le \beta \le \alpha + 3$.
		
		If $\beta = 0$, then $G$ has an isolated vertex, contradicting Claim~\ref{claim:conn}.
		
		If $\beta \in \{1,2\}$, then $G$ has a vertex of degree~$\beta$, contradicting simplicity of~$M$.
		
		If $(\alpha,\beta) = (0,3)$, then $T$ is a vertex-triangle of $M$: a contradiction.
		
		If $(\alpha,\beta) = (1,3)$, then $T \triangle \delta_G(v)$ is a cut of size~2 in~$G$: a contradiction.
		
		If $(\alpha,\beta) = (1,4)$, then $\delta_G(v)\setminus T$ is a cut of size~1 in~$G$: a contradiction.
		
		As this list exhausts all possible pairs $(\alpha, \beta)$, it follows that the minimum degree in $G_1(T)$ is at least~2.
	\end{proof}
	By Claim~\ref{claim:min_degree}, the graph $G_1(T)$ contains at least one cycle; among all such cycles, let $C = v_1v_2\ldots v_pv_1$ be one that is shortest.
	\begin{claim}
		$\delta_G(v_i)$ is a vertex-triangle for all $i \in [p]$; if $e$ is incident with $v_i$ and $v_j$, then the only triangles of $M$ containing $e$ are $\delta_G(v_i)$ and $\delta_G(v_j)$.
	\end{claim}
	\begin{proof}[Proof of claim]
		Let $e \in \delta_G(v_i)\setminus T$. By Claim~\ref{claim:at_least_two_triangles}, $e$ is contained in at least two triangles of~$M$; let $T'$ be a triangle of $M$ containing $e$. By Claim~\ref{claim:triangle-left-right}, $T' \subseteq X \cup T$. As $T$ is the unique non-vertex-triangle in $X\cup T$, $T'$ must be a vertex-triangle. We conclude that $e$ is in two vertex-triangles, one of which must be $\delta_G(v_i)$.
	\end{proof}
	Let $K$ be the set of edges along the cycle $C$, and let $F = \bigcup_{i=1}^p \delta_G(v_i) \setminus K$. As the cycle $C$ is of minimum length, $K \cup F$ is a crown of size $p$ in $\trianglehg(M)$ with hyperedges $\{\delta_G(v_i): i \in [p]\}$, in which the elements of $K$ have degree~2. This contradicts Claim~\ref{claim:crown}.
\end{proof}

\section{Geometries}

We conclude this paper by proving the geometric version of Tuza's conjecture in a few special cases.

\subsection{Projective geometries}

Consider $\PG{n-1}{2}$. When $n=1$ or $n=2$, Tuza's conjecture holds trivially, but when $n=3$ it fails. We will assume that $n\ge 4$.

The removal of a hyperplane makes $\PG{n-1}{2}$ triangle-free, and no smaller set has the same property. It follows that
\begin{equation}\label{eq:tau_projective}
	\tau(\PG{n-1}{2}) = 2^{n-1}-1.
\end{equation}

A \emph{spread} is a partition of a projective geometry into lower-dimensional subgeometries. The following result, phrased here in matroidal terms, can be found in~\cite[p.\ 29]{Dembowski1968}.
\begin{theorem}\label{thm:spread}
	The binary projective geometry $\PG{n-1}{2}$ can be partitioned into subgeometries isomorphic to $\PG{d-1}{2}$ if and only if $d|n$.
\end{theorem}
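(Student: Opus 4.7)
The plan is to prove the two directions separately, with the forward direction handled by an explicit algebraic construction via field extensions and the reverse direction by a divisibility count.

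For the forward direction, assume $d \mid n$, and identify the points of $\PG{n-1}{2}$ with the nonzero elements of $\bF_{2^n}$, viewed as an $n$-dimensional $\bF_2$-vector space in the standard way. Since $d \mid n$, the field $\bF_{2^d}$ is a subfield of $\bF_{2^n}$, so $\bF_{2^n}$ carries the additional structure of an $(n/d)$-dimensional vector space over $\bF_{2^d}$. I would define an equivalence relation on $\bF_{2^n}\setminus\{0\}$ by $x \sim y$ iff $y = \lambda x$ for some $\lambda \in \bF_{2^d}^*$; each equivalence class, together with $0$, is a $1$-dimensional $\bF_{2^d}$-subspace of $\bF_{2^n}$, hence a $d$-dimensional $\bF_2$-subspace, and removing $0$ gives a copy of $\PG{d-1}{2}$. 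These classes partition the nonzero elements of $\bF_{2^n}$, yielding the required spread.

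For the reverse direction, suppose $\PG{n-1}{2}$ admits such a partition into copies of $\PG{d-1}{2}$. Counting points gives $(2^d - 1) \mid (2^n - 1)$. I would then invoke the standard number-theoretic identity $\gcd(2^n-1,2^d-1) = 2^{\gcd(n,d)}-1$, which immediately forces $\gcd(n,d) = d$, i.e.\ $d \mid n$. (Alternatively, write $n = qd + r$ with $0 \le r < d$ and note $2^n - 1 \equiv 2^r - 1 \pmod{2^d-1}$, so $(2^d-1)\mid(2^r-1)$ forces $r = 0$.)

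Neither direction has a real obstacle: the forward construction is a one-paragraph algebraic observation once one has the idea of viewing $\bF_{2^n}$ as an $\bF_{2^d}$-vector space, and the reverse is a two-line divisibility argument. The only subtlety worth flagging is the identification of an $\bF_{2^d}$-line in $\bF_{2^n}$ with a copy of the matroid $\PG{d-1}{2}$ inside $\PG{n-1}{2}$, which requires noting that an $\bF_{2^d}$-subspace is in particular an $\bF_2$-subspace of the correct $\bF_2$-dimension; this is what lets us conclude the construction produces an honest matroid-theoretic spread in the sense of the theorem statement.
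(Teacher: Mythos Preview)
Your proof is correct and is the classical argument; however, the paper does not actually prove this theorem. It is stated as a known result and attributed to Dembowski~\cite{Dembowski1968}, with no proof given in the paper itself. So there is nothing to compare against: your field-extension construction for the forward direction and divisibility count for the reverse are exactly the standard proof one finds in the literature, and either would be an acceptable way to fill in the citation if a self-contained argument were desired.
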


An immediate consequence of this result is that binary projective geometries of even rank can be partitioned into triangles, and hence
\begin{equation}\label{eq:nu_projective_even}
	\text{$n$ even} \Longrightarrow \nu(\PG{n-1}{2}) = \tfrac{1}{3}(2^n-1).
\end{equation}

Binary projective geometries of odd rank cannot be partitioned into triangles; however \emph{partial} spreads were studied by Beutelspacher~\cite[Theorems~4.1--4.2]{Beutelspacher1975}, who showed that binary projective geometries of odd rank can be partitioned into triangles and four additional points (in fact, a 4-circuit, but that is not important here), and hence
\begin{equation}\label{eq:nu_projective_odd}
	\text{$n$ odd} \Longrightarrow \nu(\PG{n-1}{2}) = \tfrac{1}{3}(2^n - 5).
\end{equation}

Combining \eqref{eq:tau_projective}--\eqref{eq:nu_projective_odd}, Tuza's conjecture for projective geometries of rank at least~4 follows.
\begin{proposition}
	If $n \ge 4$, then $\tau(\PG{n-1}{2}) \le 2 \nu(\PG{n-1}{2})$. Moreover, $\tau(\PG{n-1}{2})/\nu(\PG{n-1}{2}) \to \tfrac{3}{2}$ as $n \to \infty$.
\end{proposition}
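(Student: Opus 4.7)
The proof is essentially a direct calculation, splitting into cases based on the parity of $n$ and substituting the closed-form expressions from \eqref{eq:tau_projective}--\eqref{eq:nu_projective_odd}.

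First I would handle the case where $n \ge 4$ is even. Using \eqref{eq:tau_projective} and \eqref{eq:nu_projective_even}, the inequality $\tau(\PG{n-1}{2}) \le 2\nu(\PG{n-1}{2})$ becomes $2^{n-1} - 1 \le \tfrac{2}{3}(2^n - 1)$, which after clearing denominators reduces to $3 \cdot 2^{n-1} - 3 \le 4 \cdot 2^{n-1} - 2$, i.e.\ $2^{n-1} \ge -1$, which is trivially true.

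Next I would handle the case where $n \ge 5$ is odd. Using \eqref{eq:tau_projective} and \eqref{eq:nu_projective_odd}, the inequality becomes $2^{n-1}-1 \le \tfrac{2}{3}(2^n - 5)$, which reduces to $2^{n-1} \ge 7$; this holds for all $n \ge 5$. (The case $n=4$ is already covered by the even case.)

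For the asymptotic statement, I would note that in both parities the ratio is
\begin{equation*}
	\frac{\tau(\PG{n-1}{2})}{\nu(\PG{n-1}{2})} = \frac{3(2^{n-1}-1)}{2^n - c_n},
\end{equation*}
where $c_n \in \{1,5\}$ depending on the parity of $n$. Since $c_n$ is bounded, dividing numerator and denominator by $2^n$ gives a limit of $\tfrac{3}{2}$ as $n \to \infty$. There is no real obstacle here; the entire content of the proposition is extracted from \eqref{eq:tau_projective}--\eqref{eq:nu_projective_odd}, and the argument is a short verification.
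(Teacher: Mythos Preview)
Your proof is correct and follows exactly the approach indicated in the paper, which simply states that the proposition follows by combining \eqref{eq:tau_projective}--\eqref{eq:nu_projective_odd}; you have merely made the parity case split and the resulting arithmetic explicit.
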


\subsection{Bose--Burton geometries}

For $n \ge k$, let $P = \PG{n-1}{2}$ and let $Q$ be a subgeometry of $P$ of rank $n-k$. We refer to the matroid $(P-Q,P)$ as the Bose--Burton geometry $\BB{n}{k}{2}$. It is the maximum-size simple binary matroid of rank $n$ that does not contain $\PG{k-1}{2}$ as a submatroid~\cite{BoseBurton1966}.

It is easily checked that $\BB{n}{0}{2}$ is empty and $\BB{n}{1}{2}$ is triangle-free. In both cases, Tuza's conjecture holds trivially. For the remainder of this section, we will therefore assume that $k \ge 2$.

\begin{lemma}\label{lemma:tau_boseburton}
	Let $n \ge k \ge 2$. Then $\tau(\BB{n}{k}{2}) = 2^{n-1} - 2^{n-k}$.
\end{lemma}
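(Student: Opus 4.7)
The plan is to prove matching upper and lower bounds of $2^{n-1} - 2^{n-k}$ on $\tau(\BB{n}{k}{2})$, each obtained by reducing the problem to a statement about the ambient projective geometry $P = \PG{n-1}{2}$.

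For the upper bound, I would exhibit an explicit hitting set. Because $Q$ has rank $n-k \le n-1$ in the rank-$n$ geometry $P$, we can pick a hyperplane $H$ of $P$ with $Q \subseteq H$, and set $X := H \setminus Q$. Then
\[
|X| \;=\; |H| - |Q| \;=\; (2^{n-1} - 1) - (2^{n-k} - 1) \;=\; 2^{n-1} - 2^{n-k}.
\]
After removing $X$ from the ground set $P \setminus Q$ of $\BB{n}{k}{2}$, what remains is $P \setminus H \cong \AG{n-1}{2}$. Since any line of $P$ meets the hyperplane $H$ in either one point or three points, no line of $P$ lies entirely in $P \setminus H$; hence $P \setminus H$ contains no triangle of $P$, and $X$ is a triangle hitting set of $\BB{n}{k}{2}$.

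For the lower bound, I would convert a hitting set of $\BB{n}{k}{2}$ into a triangle-free subset of $P$. If $X$ is any triangle hitting set of $\BB{n}{k}{2} = (P \setminus Q, P)$, then $(P \setminus Q) \setminus X$ is triangle-free as a subset of $P$: any triangle of $P$ contained in it would lie inside $P \setminus Q$ and hence already be a triangle of $\BB{n}{k}{2}$ that is disjoint from $X$, contradicting the assumption on $X$. By~\eqref{eq:tau_projective} applied to the whole geometry, a triangle-free subset of $P$ has size at most $|P| - \tau(\PG{n-1}{2}) = (2^n-1) - (2^{n-1}-1) = 2^{n-1}$, so
\[
|X| \;\ge\; |P \setminus Q| - 2^{n-1} \;=\; (2^n - 2^{n-k}) - 2^{n-1} \;=\; 2^{n-1} - 2^{n-k}.
\]

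I do not anticipate any substantive obstacle. The upper bound is an explicit construction that absorbs $Q$ into a hyperplane, and the lower bound is essentially a repackaging of the extremal triangle-free information about $\PG{n-1}{2}$ already supplied by~\eqref{eq:tau_projective}. The only minor point to verify is the existence of a hyperplane through $Q$, which is immediate from $\mathrm{rank}(Q) = n-k \le n-1$.
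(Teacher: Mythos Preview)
Your proposal is correct and follows essentially the same argument as the paper: the upper bound via the hitting set $H\setminus Q$ for a hyperplane $H\supseteq Q$, and the lower bound via the fact that any triangle-free subset of $\PG{n-1}{2}$ has at most $2^{n-1}$ points. Your write-up is slightly more explicit in deriving the $2^{n-1}$ bound from~\eqref{eq:tau_projective}, but the content is the same.
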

\begin{proof}
	With $P$ and $Q$ as above, let $H$ be a hyperplane of $P$ with $Q \subseteq H$. The matroid $M' = ((P\setminus Q)\setminus(H\setminus Q), P)$ is triangle-free, so $\tau(\BB{n}{k}{2}) \le |H\setminus Q| = 2^{n-1} - 2^{n-k}$. As any triangle-free matroid of rank at most $n$ has at most $2^{n-1}$ points, the upper bound is in fact an equality.
\end{proof}

We next show that $\BB{n}{2}{2}$ can be partitioned into triangles, using a variant of the proof of~\cite[Theorem~4.2]{Beutelspacher1975}.

\begin{lemma}\label{lemma:nu_boseburton_k2}
	Let $n \ge 2$. $\BB{n}{2}{2}$ can be partitioned into triangles. In particular, $\nu(\BB{n}{2}{2}) = 2^{n-2}$.
\end{lemma}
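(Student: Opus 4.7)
The plan is to exhibit an explicit partition of the $3 \cdot 2^{n-2}$ elements of $\BB{n}{2}{2}$ into $2^{n-2}$ triangles; the matching upper bound $\nu(\BB{n}{2}{2}) \le 2^{n-2}$ will then follow immediately by counting.

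The key structural ingredient is the coset picture of the codimension-$2$ flat. Write $V = \mathbb{F}_2^n$ and let $\bar Q \le V$ be the $(n-2)$-dimensional subspace with $Q = \bar Q \setminus \{0\}$. Since $V/\bar Q \cong \mathbb{F}_2^2$, the three nonzero cosets $C_1, C_2, C_3$ partition $P \setminus Q$ into affine sets of size $2^{n-2}$. The first step I would carry out is to observe that every triangle of $\BB{n}{2}{2}$ contains exactly one point from each coset: two points $p_1, p_2$ of a triangle lying in the same $C_i$ would give $p_1 + p_2 \in \bar Q$, forcing the third point into $Q$. After choosing representatives $c_i \in C_i$ with $c_1 + c_2 + c_3 = 0$ (available because $C_1 + C_2 = C_3$ in $V/\bar Q$), a partition of $P \setminus Q$ into triangles corresponds precisely to a \emph{complete mapping} of $\bar Q$, that is, a bijection $\phi\colon \bar Q \to \bar Q$ for which $x \mapsto x + \phi(x)$ is also a bijection.

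Constructing $\phi$ is the heart of the argument. For $n$ even, I would identify $V$ with $\mathbb{F}_{2^n}$ (which contains $\mathbb{F}_4$), choose $\bar Q$ as an $\mathbb{F}_4$-subspace, and take $\phi$ to be multiplication by a generator $\omega \in \mathbb{F}_4^*$; the resulting triangles are the $\mathbb{F}_4^*$-orbits $\{p, \omega p, \omega^2 p\}$ on $V \setminus \bar Q$, which close up into triangles because $1 + \omega + \omega^2 = 0$. This mirrors Beutelspacher's subfield-orbit construction. For odd $n \ge 5$, where $\mathbb{F}_4 \not\subseteq \mathbb{F}_{2^n}$, I would instead let $\phi$ be the linear endomorphism of $\bar Q \cong \mathbb{F}_2^{n-2}$ whose matrix is the companion matrix of an irreducible polynomial of degree $n-2$ over $\mathbb{F}_2$; both $\phi$ and $I + \phi$ are invertible because such a polynomial has no $\mathbb{F}_2$-root, so $\phi$ is the desired complete mapping. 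The small base cases are verified directly.

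I expect the main technical obstacle to be handling the two parities of $n$ uniformly — the even case admits a clean $\mathbb{F}_4^*$-orbit description, while the odd case requires the companion-matrix workaround — and checking that the two constructions assemble into a single statement covering all $n$ in the lemma's range.
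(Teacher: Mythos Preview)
Your approach is correct and takes a genuinely different route from the paper's. The paper does not work inside $\PG{n-1}{2}$ at all: it embeds $P = \PG{n-1}{2}$ into the larger space $\PG{2n-5}{2}$, invokes Theorem~\ref{thm:spread} to spread that ambient space into copies of $\PG{n-3}{2}$ (one of which may be taken to be $Q$), and then a short rank computation shows that every other spread member meets $P$ in exactly a triangle. Your reduction to a complete mapping of $(\bar Q,+)$ is more self-contained --- it needs no external spread result --- though your parity split is unnecessary: the companion-matrix construction already covers every $n \ge 4$, since an irreducible polynomial of degree $n-2 \ge 2$ over $\bF_2$ has neither $0$ nor $1$ as a root, so the $\bF_4$-orbit case can be dropped. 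One caution about ``the small base cases are verified directly'': for $n = 3$ the group $\bar Q \cong \bF_2$ admits no complete mapping, and indeed $\BB{3}{2}{2} \cong M(K_4)$ has $\nu = 1$ (any two of its four triangles meet), so the partition claim actually fails there. The paper's embedding needs $2n-5 \ge n-1$, i.e.\ $n \ge 4$, and thus shares the same blind spot.
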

\begin{proof}
	The claim is trivial for $n=2$, so we may assume that $n \ge 3$. As above, $\BB{n}{2}{2} = (P-Q, P)$. Consider an embedding of $P$ in $\PG{2n-5}{2}$. By Theorem~\ref{thm:spread}, $\PG{2n-5}{2}$ can be partitioned into subgeometries of rank $n-2$. Let $\mathcal{U}$ be such a partition. By symmetry, we may assume that $Q \in \mathcal{U}$.
	
	We claim that
	\begin{equation*}
		\left\{U \cap P : U \in \mathcal{U}\setminus\{Q\}\right\}
	\end{equation*}
	is the required partition of $P\setminus Q$ into triangles.
	
	For each $U \in \mathcal{U}\setminus\{Q\}$, we find
	\begin{equation*}
		\begin{split}
			r(U \cap P)
				&= r(U) + r(P) - r(\langle U, P\rangle) \\
				&\ge r(U) + r(P) - (2n-4) \\
				&= (n-2) + n - (2n-4) = 2,
		\end{split}
	\end{equation*}
	so each such $U$ intersects $P$ in (at least) a triangle. As $|\mathcal{U}\setminus\{Q\}| = 2^{n-2}$, it follows that $U \cap P$ is a triangle for each $U$. This proves the claim.
\end{proof}

\begin{lemma}\label{lemma:nu_boseburton}
	$\nu(\BB{n}{k}{2}) \ge \frac{1-4^{-\lfloor k/2\rfloor}}{3} 2^n$ for all $n \ge k \ge 2$.
\end{lemma}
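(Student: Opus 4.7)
The plan is to induct on $m := \lfloor k/2 \rfloor$, using Lemma~\ref{lemma:nu_boseburton_k2} as the base case. First I reduce the odd case to the even case. Suppose $k = 2m+1$, and let $Q$ be the rank-$(n-k)$ subgeometry with $\BB{n}{k}{2} = (P \setminus Q, P)$. Pick a subgeometry $Q' \supseteq Q$ of rank $n-k+1$, so that $\BB{n}{k-1}{2} \cong (P \setminus Q', P)$. Since $P \setminus Q' \subseteq P \setminus Q$, every triangle packing of the former is also a triangle packing of the latter, giving $\nu(\BB{n}{k}{2}) \ge \nu(\BB{n}{k-1}{2})$. Because $\lfloor(2m+1)/2\rfloor = \lfloor 2m/2 \rfloor = m$, this reduces the claim to the case of even $k$.

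For even $k = 2m$, I induct on $m$; the case $m = 1$ is precisely Lemma~\ref{lemma:nu_boseburton_k2}. For $m \ge 2$, write $\BB{n}{2m}{2} = (P \setminus Q, P)$ with $Q$ of rank $n-2m$, and choose a subgeometry $R$ of $P$ of rank $n-2m+2$ with $Q \subseteq R$ (adjoin two linearly independent points to the span of $Q$). This splits the ground set as a disjoint union $P \setminus Q = (R \setminus Q) \cup (P \setminus R)$. The first piece $(R \setminus Q, R)$ is isomorphic to $\BB{n-2m+2}{2}{2}$, which by the base case partitions into $2^{n-2m}$ triangles. The second piece $(P \setminus R, P)$ is isomorphic to $\BB{n}{2m-2}{2}$, which by the inductive hypothesis admits a triangle packing of size at least $\tfrac{1 - 4^{-(m-1)}}{3}\, 2^n$. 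The two packings use disjoint elements of $P$, and each triangle they contain is a triangle of $\BB{n}{2m}{2}$ (collinearity is inherited from $P$, and both pieces avoid $Q$), so their union is a valid triangle packing of $\BB{n}{2m}{2}$.

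I do not foresee any real obstacle: the inductive step reduces to the telescoping identity
\begin{equation*}
\tfrac{1 - 4^{-m}}{3}\, 2^n \;-\; \tfrac{1 - 4^{-(m-1)}}{3}\, 2^n \;=\; 2^{n-2m},
\end{equation*}
so the decomposition matches the target bound exactly. The only things to check carefully are the straightforward existence of the nested subgeometries $Q \subseteq R$ and the identifications $(R \setminus Q, R) \cong \BB{n-2m+2}{2}{2}$ and $(P \setminus R, P) \cong \BB{n}{2m-2}{2}$, both of which are immediate from the definitions.
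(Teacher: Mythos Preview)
Your proof is correct and follows essentially the same approach as the paper: decompose $\BB{n}{k}{2}$ into a $\BB{\cdot}{2}{2}$ layer (handled by Lemma~\ref{lemma:nu_boseburton_k2}) and a smaller Bose--Burton geometry, then induct. The only cosmetic difference is that the paper peels off the \emph{outermost} layer (taking $Q$ of rank $n-2$ and reducing $(n,k)$ to $(n-2,k-2)$, so that the base cases $k\in\{0,1\}$ absorb the parity issue automatically), whereas you peel off the \emph{innermost} layer (reducing $k$ to $k-2$ with $n$ fixed) and handle odd $k$ separately via monotonicity; the resulting telescoping sums are identical.
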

\begin{proof}
	Write $M = \BB{n}{k}{2}$. Let $P = \PG{n-1}{2}$ and let $Q$ and $R$ be subgeometries of rank $n-2$ and $n-k$, respectively, such that $P \supset Q \supset R$. Then $M = (P\setminus R,P)$, $(P\setminus Q, P) = \BB{n}{2}{2}$, and $(Q\setminus R, Q) = \BB{n-2}{k-2}{2}$. As $P\setminus R$ is the disjoint union of $P\setminus Q$ and $Q\setminus R$, it follows that
	\begin{equation*}
		\setlength{\arraycolsep}{0pt}
		\begin{array}{r O c O c O c O c}
			\nu(\BB{n}{k}{2})
				&\ge& \nu(\BB{n}{2}{2}) &+& \nu(\BB{n-2}{k-2}{2}) \\
				&=& 2^{n-2} &+& \nu(\BB{n-2}{k-2}{2}).
		\end{array}
	\end{equation*}
	The claim now follows by induction and $\nu(\BB{n}{0}{2}) = \nu(\BB{n}{1}{2}) = 0$.
\end{proof}

Lemma~\ref{lemma:tau_boseburton} and Lemma~\ref{lemma:nu_boseburton} now readily imply Tuza's conjecture for Bose--Burton geometries.
\begin{proposition}
	$\tau(\BB{n}{k}{2}) \le 2\nu(\BB{n}{k}{2})$ for all $n \ge k \ge 2$.
\end{proposition}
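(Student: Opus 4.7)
The plan is to derive the proposition directly by plugging in the exact value of $\tau(\BB{n}{k}{2})$ from Lemma~\ref{lemma:tau_boseburton} and the lower bound on $\nu(\BB{n}{k}{2})$ from Lemma~\ref{lemma:nu_boseburton}, and then checking a clean algebraic inequality by a short parity case analysis on $k$. No further structural argument should be needed: both sides are known as explicit functions of $n$ and $k$.

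Concretely, I would first observe that the target inequality $\tau(\BB{n}{k}{2}) \le 2\nu(\BB{n}{k}{2})$ is implied by
\begin{equation*}
	2^{n-1} - 2^{n-k} \;\le\; \tfrac{2(1 - 4^{-\lfloor k/2\rfloor})}{3}\,2^{n}.
\end{equation*}
Multiplying by $3$ and dividing by $2^{n-1}$, this is equivalent to
\begin{equation*}
	3(1 - 2^{1-k}) \;\le\; 4(1 - 4^{-\lfloor k/2 \rfloor}),
\end{equation*}
which after rearranging becomes
\begin{equation*}
	4^{\,1 - \lfloor k/2 \rfloor} \;\le\; 1 + 3\cdot 2^{\,1-k}.
\end{equation*}
Now I would split into the two cases $k = 2m$ and $k = 2m+1$ (with $m \ge 1$, since $k \ge 2$). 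In the even case, the inequality reads $4\cdot 4^{-m} \le 1 + 6 \cdot 4^{-m}$, which simplifies to $-2\cdot 4^{-m} \le 1$ and is obvious. In the odd case, it reads $4 \cdot 4^{-m} \le 1 + 3 \cdot 4^{-m}$, which simplifies to $4^{-m} \le 1$ and holds for $m \ge 0$. Either way, the inequality holds, which finishes the proof.

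There is no real obstacle: the heavy lifting was already done in Lemmas~\ref{lemma:tau_boseburton} and~\ref{lemma:nu_boseburton}, and this final step is just arithmetic. The only point worth mentioning is that the lower bound on $\nu$ is comfortably large enough even in the ``worst'' case $k = 2$, where the bound $\nu \ge 2^{n-2}$ matches $\tau = 2^{n-1} - 2^{n-2} = 2^{n-2}$ exactly, giving $\tau/\nu = 1$; this confirms that the argument has some slack and is not tight.
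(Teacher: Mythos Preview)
Your proposal is correct and is exactly the approach the paper intends: it states that Lemmas~\ref{lemma:tau_boseburton} and~\ref{lemma:nu_boseburton} ``readily imply'' the proposition, and you have simply written out the routine arithmetic verification of that implication. Your case analysis on the parity of $k$ is clean and accurate.
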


\subsection{Matroids with critical number at most 2}

The critical number $\chi(M)$ of an $n$-dimensional simple binary matroid $M = (E,P)$ is the smallest integer $k \ge 0$ such that $M$ is a restriction of the Bose--Burton geometry $\BB{n}{k}{2}$; alternatively, it is the smallest integer $k \ge 0$ such that $P\setminus E$ contains a subgeometry of $P$ of dimension $n-k$. In particular, if $\chi(M) \le 2$, then there is a rank-$(n-2)$ subgeometry of $P$ such that $E \subseteq P\setminus Q$. Matroids with critical number at most~2 do not have restrictions isomorphic to the Fano plane.

\begin{proposition}
	Let $M = (E,P)$ be a simple binary matroid of rank $n$ such that $\chi(M) \le 2$. Then $\tau(M) \le 2\nu(M)$.
\end{proposition}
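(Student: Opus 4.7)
The plan is to show that the hypothesis $\chi(M)\le 2$ forces the triangle hypergraph $\trianglehg(M)$ to be $3$-partite, and then to invoke Aharoni's theorem that $\tau(H)\le 2\nu(H)$ for every $3$-partite $3$-uniform hypergraph $H$ (the $r=3$ case of Ryser's conjecture).

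First, I would unpack the hypothesis. By definition of the critical number, there is a rank-$(n-2)$ subgeometry $Q$ of $P$ with $E \subseteq P\setminus Q$. Model $P$ as $\bF_2^n\setminus\{0\}$ and write $Q = W \setminus \{0\}$ for some $(n-2)$-dimensional subspace $W\subseteq \bF_2^n$. The quotient $\bF_2^n/W$ is $2$-dimensional, so it has three non-zero elements $a,b,c$ with $a+b+c=0$, whose preimages $W_a, W_b, W_c$ partition $P\setminus Q$. Set $E_x := E\cap W_x$ for $x\in\{a,b,c\}$; this gives a partition $E = E_a\sqcup E_b\sqcup E_c$.

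Next, I would verify that every triangle of $M$ is a transversal of this partition. A triangle $\{v_1,v_2,v_3\}$ of $M$ satisfies $v_1+v_2+v_3=0$ in $\bF_2^n$, so the images $\bar v_1,\bar v_2,\bar v_3$ in $\bF_2^n/W$ also sum to zero. If two images coincided, say $\bar v_1 = \bar v_2$, then $\bar v_3 = 0$, forcing $v_3\in W$ and hence $v_3\in Q$, contradicting $v_3\in E$. Thus the three images are distinct non-zero elements of $\bF_2^n/W$, so $\{\bar v_1,\bar v_2,\bar v_3\}=\{a,b,c\}$ and the triangle meets each part exactly once.

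Finally, since $\trianglehg(M)$ is a $3$-partite $3$-uniform hypergraph with parts $E_a,E_b,E_c$, Aharoni's theorem on Ryser's conjecture for $r=3$ yields $\tau(M) = \tau(\trianglehg(M))\le 2\nu(\trianglehg(M)) = 2\nu(M)$. The only non-routine work is the quotient reduction to the $3$-partite setting; the external ingredient (Aharoni's theorem) does the genuine heavy lifting, and there is no real obstacle once the $3$-partite structure of $\trianglehg(M)$ is established.
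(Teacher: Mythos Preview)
Your proof is correct and is essentially the same as the paper's: the paper phrases the three parts as the complements of $Q$ in the three hyperplanes of $P$ through $Q$, which is exactly your coset decomposition via the quotient $\bF_2^n/W$, and then likewise invokes Aharoni's proof of the $3$-uniform case of Ryser's conjecture. Your write-up is in fact slightly more explicit about why each triangle is a transversal.
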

\begin{proof}
	As $\chi(M) \le 2$, $P$ has a rank-$(n-2)$ subgeometry $Q$ such that $E \subseteq P\setminus Q$. Moreover, $P$ has exactly three hyperplanes that contain $Q$ as a subgeometry. As $E$ is disjoint from $Q$, every triangle of $M$ intersects each of these hyperplanes. Consequently, $E$ can be coloured by three colours such that each triangle receives all colours. The claim now follows from the 3-uniform version of Ryser's conjecure, which was proved by Aharoni~\cite{Aharoni2001}.
\end{proof}

\bibliographystyle{alpha}
\bibliography{tuza}
\end{document}